
\documentclass[12pt]{article}
\usepackage{amssymb,amsmath,amsfonts}
\usepackage{graphics}
\usepackage{graphicx}

\def\eqref#1{$(\ref{#1})$}

\newenvironment{proof}{\noindent {\em {Proof}}.}{$\square$
\medskip}
\parskip          =1.5mm           
\oddsidemargin    =1cm \evensidemargin   =1cm \textwidth =16cm
\textheight       =23.5cm \headheight       =0cm \topskip =0cm
\topmargin        =-.6cm \hoffset          =-1cm
\newtheorem{theorem}{Theorem}[section]
\newtheorem{corollary}[theorem]{Corollary}
\newtheorem{lemma}[theorem]{Lemma}
\newtheorem{remark}[theorem]{Remark}
\newtheorem{proposition}[theorem]{Proposition}
\newtheorem{definition}[theorem]{Definition}
\newtheorem{example}[theorem]{Example}
\newtheorem{question}[theorem]{Question}
\newtheorem{problem}[theorem]{Problem}

\newcommand{\bt}[1]{\begin{theorem}\label{#1}}
\newcommand{\bc}[1]{\begin{corollary}\label{#1}}
\newcommand{\bl}[1]{\begin{lemma}\label{#1}}
\newcommand{\bp}[1]{\begin{proposition}\label{#1}}
\newcommand{\be}[1]{\begin{example}\rm\label{#1}}
\newcommand{\bq}[1]{\begin{question}\rm\label{#1}}
\newcommand{\bprob}[1]{\begin{problem}\rm\label{#1}}
\newcommand{\beq}[1]{\begin{eqnarray}\label{#1}}
\newcommand{\br}[1]{\begin{remark}\rm\label{#1}}

\newcommand{\el}{\end{lemma}}
\newcommand{\ep}{\end{proposition}}
\newcommand{\ee}{\end{example}}
\newcommand{\eq}{\end{question}}
\newcommand{\eprob}{\end{problem}}
\newcommand{\eeq}{\end{eqnarray}}
\newcommand{\ed}{\end{definition}}
\newcommand{\et}{\end{theorem}}
\newcommand{\ec}{\end{corollary}}
\newcommand{\er}{\end{remark}}

\title{\bf Proximal quasi-normal structure and existence of best proximity points}
\author{Farhad Fouladi, Ali Abkar\footnote{corresponding author}
\vspace{0.5in}\\
Department of Pure Mathematics, Faculty of Science,\\
Imam Khomeini International University, Qazvin 34149, Iran\\
\\
\\
\texttt{Email:f[undeline]folade@yahoo.com,~abkar@sci.ikiu.ac.ir}\\
 }
\date{}
\begin{document}
\maketitle \textbf{Abstract.} In this paper, we use the concept of proximal quasi-normal structure (P. Q-N. S) to study the existence of best proximity points for cyclic mappings, cyclic contractions, relatively Kannan nonexpansive mappings, as well as for orbitally
nonexpansive mappings. In this way, we generalize several recent results obtained by others.\\

\noindent\textbf{Keywords}: {Orbitally nonexpansive mapping, Cyclic mapping, Cyclic contraction, Kannan nonexpansive mapping,
Proximal quasi-normal structure, Convex structure.}\\

\noindent \textbf{MSC}: 47H09\\
\textbf{}{}
\section{Introduction}
Let $(X,d)$ be a metric space and $K\subseteq X$. A
mapping $T:K \rightarrow K$ is said to be nonexpansive if $d(Tu, tv)\leq
d(u, v)$ for all $u,v\in K$. Nonexpansive mappings are those which
have Lipschitz constant equal to one. For example, contractions,
isometries and resolvents of accretive operators on normed spaces are nonexpansive.
The solutions of the equation $Tu=u$ are fixed points of the mapping
$T:K\rightarrow K$. If $A,B$ are nonempty subsets of a metric space
$(X,d)$ and $T:A\cup B\rightarrow A\cup B$, then for the existence of a fixed
point it is necessary that $T(A)\cap A\neq \emptyset$. If this does not
hold, $d(u,Tu)>0$ for each $u\in A$. In this situation our aim is to
minimize the term $d(u,Tu)$. This line of investigation gives rise to the best approximation theory.\par
Assume that $A,B$ are nonempty subsets of a metric space $X$. A mapping
$T:A\cup B\rightarrow A\cup B$ is said to be cyclic provided that
$T(A)\subseteq B$ and $T(B)\subseteq A$. We say that the pair $(A,B)$ of nonempty subsets
in a metric space $X$ satisfies a property if both $A$ and $B$
satisfy that property. For example, $(A, B)$ is convex if and only
if both $A$ and $B$ are convex. Moreover, throughout this paper we
shall use the following notations and definitions:
\begin{equation*}\label{eq3}
\begin{array}{lll}
(A,B)\subseteq &(C,D) &\Leftrightarrow A\subseteq C \hspace{ .5 cm} B\subseteq D,\\
&\delta_u(A)&=\sup\{d(u,v):v\in A\} \hspace{.5 cm} \forall  u\in X,\\
&\delta(A,B)&=\sup\{d(u,v): u\in A,v\in B\}.\\
\end{array}
\end{equation*}
The notion of normal structure for Banach spaces was introduced by
Brodskii and Milman in \cite{4}, where it was shown that every
weakly compact convex set which has this property contains a point
which is fixed under surjective isometry. More information on normal
structure, can be found in \cite{11, 12,13,18}. Abkar and Gabeleh in \cite{7} introduced the notion of
proximal quasi-normal structure (P. Q-N. S) and proved some fixed point problems under this structure.
In this article we aim to generalize some problems already proved under stronger assumptions in \cite{5,8,19}.
The main objective here is the assumption of having proximal quasi normal structure, instead of having proximal normal structure.
In all this cases we shall prove the existence of best proximity (or fixed) points for the class of mappings under discussion. \par
In section 2 we study cyclic contractions and by recalling some results of M. Gabeleh \cite{9} for cyclic orbitally contraction mappings, and using the convex structure for metric spaces, we
prove the existence of best proximity points for cyclic contractions in Banach spaces with(P. Q-N. S).
In section 3 we recall some definitions from \cite{8} and prove the existence of best proximity points for relatively Kannan nonexpansive and cyclic contraction mappings with the proximally compact structure as introduced by M. Gabeleh \cite{8}. 
In section 4, we take up the class of cyclic relatively nonexpansive mappings, and finally in the last section we consider
orbitally nonexpansive mappings presented in \cite{8} by L-F. Enrique, in this case we replace the structure in the main result of \cite{8} and prove the existence of fixed point for this mappings with (P. Q-N. S).
\section{Cyclic contraction mappings}
In \cite{3}, Eldered and Veermani introduced the class of cyclic
contractions. For this class of mappings, they proved best proximity point theorems. Now, we study
the same problem in the situation in which the metric space under discussion has a convex structure (see below for the definition).
\begin{definition}\label{def_4_1}\cite{3}
Let $A$ and $B$ be nonempty subsets of a metric space $X$. A mapping
$T:A\cup B\rightarrow A\cup B$ is said to be a cyclic contraction if
$T$ is cyclic and
$$d(Tu,Tv)\leq k d(u,v)+(1-k)dist(A,B)$$
for some $k\in(0,1)$ and for all $u\in A, v\in B$.
\end{definition}
Let $T$ be a cyclic mapping. A point $u\in  A\cup B$ is said to be a
best proximity point for $T$ provided that $d(u,Tu)=dist(A,B)$. For
a uniformly convex Banach space $X$, Eldered and Veermani proved the
following theorem:
\begin{theorem}\label{th_4_1}
\cite{1} Let $A$ and $B$ be nonempty closed convex subsets of
uniformly convex Banach space $X$ and let $T:A\cup B\rightarrow
A\cup B$ be a cyclic contraction map. For $u_0\in A$ define
$u_{n+1}:=Tu_n$ for each $n\geq 0$. Then there exists a unique $u\in A$ such
 that $u_{2n}\rightarrow u$ and $\|u-Tu\|=dist(A,B)$.
\end{theorem}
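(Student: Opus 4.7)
The plan is to adapt the classical Eldred-Veeramani argument, which has three ingredients: geometric control of the step-lengths, a Cauchy argument fueled by uniform convexity, and passage to the limit.

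First I would establish $\|u_n - u_{n+1}\| \to \text{dist}(A,B)$. Because $u_n$ and $u_{n+1}$ lie in opposite sets of the pair $(A,B)$, the cyclic contraction gives
\[
\|u_{n+1} - u_{n+2}\| \leq k\|u_n - u_{n+1}\| + (1-k)\,\text{dist}(A,B),
\]
so $a_n := \|u_n - u_{n+1}\| - \text{dist}(A,B)$ is a nonnegative sequence with $a_{n+1} \leq k a_n$, hence $a_n \to 0$.

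The core difficulty is showing that $\{u_{2n}\}$ is Cauchy, and this is where uniform convexity is essential. The main tool is the lemma of Eldred-Veeramani: in a uniformly convex Banach space with $A, B$ closed convex, if $\{x_n\},\{z_n\} \subseteq A$ and $\{y_n\} \subseteq B$ satisfy $\|x_n - y_n\| \to \text{dist}(A,B)$ and $\|z_n - y_n\| \to \text{dist}(A,B)$, then $\|x_n - z_n\| \to 0$. To apply it, I would show that for every $\epsilon > 0$ there is $N$ with $\|u_{2m} - u_{2n+1}\| \leq \text{dist}(A,B) + \epsilon$ whenever $m \geq n \geq N$. Iterating the cyclic contraction $2n+1$ times telescopes to
\[
\|u_{2m} - u_{2n+1}\| \leq k^{2n+1}\|u_{2(m-n)-1} - u_0\| + (1 - k^{2n+1})\,\text{dist}(A,B);
\]
the first term vanishes uniformly in $m$ once $\{u_l\}$ is shown to be bounded, which itself follows from a secondary recurrence of the form $\|u_{n+2} - u_0\| \leq k\|u_{n+1} - u_0\| + C$ obtained by another application of the cyclic contraction. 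The Eldred-Veeramani lemma then forces $\|u_{2m} - u_{2n}\| \to 0$.

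Once $\{u_{2n}\}$ is Cauchy, closedness of $A$ in the Banach space $X$ yields $u_{2n} \to u$ for some $u \in A$. The contraction inequality makes $T$ continuous, so $u_{2n+1} = Tu_{2n} \to Tu$, and passing to the limit in $\|u_{2n} - u_{2n+1}\| \to \text{dist}(A,B)$ gives $\|u - Tu\| = \text{dist}(A,B)$. Uniqueness of $u$ is immediate since limits in a normed space are unique.
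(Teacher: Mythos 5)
You should first note that the paper does not prove this theorem at all: it is quoted from the literature (it is Eldred and Veeramani's convergence theorem), so there is no in-paper proof to compare against. Your outline reproduces the standard published argument --- the geometric decay $a_{n+1}\le k\,a_n$, boundedness of the orbit, the estimate $\|u_{2m}-u_{2n+1}\|\le dist(A,B)+\epsilon$ for large $m\ge n$, and the uniform-convexity lemma to get that $\{u_{2n}\}$ is Cauchy --- and these stages are correct in substance. Two small points: the recurrence $\|u_{n+2}-u_0\|\le k\|u_{n+1}-u_0\|+C$ invokes the contraction on the pair $(u_{n+1},u_0)$, which lies in $A\times B$ only for one parity of $n$, so the boundedness recursion must be run against $u_0$ and $u_1$ alternately; and the lemma as you state it is single-indexed, so to extract Cauchyness you need either its double-indexed variant or a subsequence/contradiction argument.

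The one genuine gap is in the last step. You assert that ``the contraction inequality makes $T$ continuous'' and use this to pass $u_{2n+1}=Tu_{2n}\to Tu$. But the cyclic contraction condition only constrains $\|Tu-Tv\|$ for $(u,v)\in A\times B$; it gives no control over $\|Tu-Tu'\|$ when $u,u'$ both lie in $A$, and such maps need not be continuous on either set (compare the discontinuous cyclic map in Example 4.4 of this paper). So continuity of $T$ on $A$ is not available. The standard repair avoids it entirely: from $u_{2n}\to u$ and $\|u_{2n}-u_{2n-1}\|\to dist(A,B)$ one gets $\|u-u_{2n-1}\|\to dist(A,B)$; since $(u,u_{2n-1})\in A\times B$ the contraction inequality applies and gives $dist(A,B)\le\|Tu-u_{2n}\|=\|Tu-Tu_{2n-1}\|\le k\|u-u_{2n-1}\|+(1-k)dist(A,B)\to dist(A,B)$, whence $\|Tu-u\|=dist(A,B)$. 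Finally, your uniqueness remark covers only the literal reading ``unique $u$ with $u_{2n}\to u$''; if, as in the original theorem, uniqueness of the best proximity point in $A$ is intended, an additional strict-convexity argument is required (show $T^2p=p$ for any best proximity point $p$, then deduce $p=q$).
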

In \cite{19}, Takahashi introduced the notion of convexity in metric
spaces as follows:
\begin{definition}\label{def4_2}\cite{19}
Let $(X,d)$ be a metric space and $J:=[0,1]$. A mapping $X\times
X\times J\rightarrow X$ is said to be a convex structure on $X$
provided that for each $(u,v;\lambda)\in X\times X\times
J\rightarrow X$ and $u\in X$,
$$d(z,w(u,v;\lambda))\leq \lambda d(z,u)+(1-\lambda) d(z,v).$$
\end{definition}
A metric space (X,d) together with a convex structure $W$ is called
a convex metric space, which is denoted by $(X,d,W)$. A Banach space
and each of its convex subsets are convex metric spaces.

To describe our results, we need some definitions and preliminary
facts from the reference \cite{19}.

Notice that each closed ball in a convex metric space is a convex subset of that space.

\begin{definition}\label{def4_4}\cite{19}
A convex metric space $(X,d,W)$ is said to have property (C) if
every bounded decreasing net of nonempty closed subsets of $X$ has a
nonempty intersection.
\end{definition}
For example every weakly compact convex subset of a Banach space has
property (C).

Now we prove the existence of best proximity point for cyclic contraction mappings in convex metric space as
follows.

\begin{theorem}\label{th4_2}
Let $(A,B)$ be a nonempty, bounded, closed and convex pair in a
convex metric space $(X,d,W)$. Suppose that $T:A\cup B\rightarrow
A\cup B$ is a cyclic contraction. If $X$ has the property (C), then
$T$ has a best proximity point.
\end{theorem}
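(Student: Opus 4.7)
The plan is to adapt the classical minimal invariant pair technique of Kirk to the convex metric space setting, using property~(C) in place of weak compactness. I would proceed in four steps.

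First, let $\mathcal{F}$ denote the collection of all pairs $(E,F)$ with $E$ a nonempty closed convex subset of $A$, $F$ a nonempty closed convex subset of $B$, and $T(E)\subseteq F$, $T(F)\subseteq E$. Since $(A,B)\in\mathcal{F}$, the family is nonempty; order it by componentwise inclusion. For any descending chain $\{(E_\alpha,F_\alpha)\}$, the pair $(\bigcap_\alpha E_\alpha,\bigcap_\alpha F_\alpha)$ is again in $\mathcal{F}$: property~(C) applied to the bounded decreasing nets $\{E_\alpha\}$ and $\{F_\alpha\}$ gives nonemptiness, closedness and convexity are preserved under intersection, and cyclic $T$-invariance follows from $T(\bigcap E_\alpha)\subseteq\bigcap T(E_\alpha)\subseteq\bigcap F_\alpha$. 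By Zorn's lemma, $\mathcal{F}$ contains a minimal element $(K_1,K_2)$.

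Next, I would set $L_1=\overline{\mathrm{co}}(T(K_2))$ and $L_2=\overline{\mathrm{co}}(T(K_1))$, where $\overline{\mathrm{co}}$ denotes the smallest closed convex set (in the sense of the convex structure $W$) containing the given set. Then $L_1\subseteq K_1$ and $L_2\subseteq K_2$ because $K_1,K_2$ are themselves closed and convex. Moreover $T(L_1)\subseteq T(K_1)\subseteq L_2$ and similarly $T(L_2)\subseteq L_1$, so $(L_1,L_2)\in\mathcal{F}$. Minimality of $(K_1,K_2)$ then forces $L_1=K_1$ and $L_2=K_2$.

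The decisive step is to establish the lemma that for any bounded sets $S,S'\subseteq X$,
\[
\delta\bigl(\overline{\mathrm{co}}(S),\,\overline{\mathrm{co}}(S')\bigr)=\delta(S,S').
\]
This relies on iterating the defining inequality $d(z,w(u,v;\lambda))\le\lambda d(z,u)+(1-\lambda)d(z,v)\le\max\{d(z,u),d(z,v)\}$ to show that taking a convex combination cannot increase the distance to a fixed external point, and then using continuity of $d$ to pass to the closure. I expect this diameter-preservation lemma to be the main technical obstacle, since it is the place where the convex structure $W$ does all the work that uniform convexity did in Theorem~\ref{th_4_1}.

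Once this lemma is in hand, the proof concludes quickly. Applying it together with $K_1=L_1$, $K_2=L_2$, and the cyclic contraction inequality gives
\[
\delta(K_1,K_2)=\delta\bigl(T(K_2),T(K_1)\bigr)\le k\,\delta(K_1,K_2)+(1-k)\,\mathrm{dist}(A,B),
\]
so $\delta(K_1,K_2)\le\mathrm{dist}(A,B)$, and the reverse inequality is automatic. Hence every pair $(u,v)\in K_1\times K_2$ satisfies $d(u,v)=\mathrm{dist}(A,B)$; in particular, picking any $u\in K_1$ and noting $Tu\in K_2$ yields $d(u,Tu)=\mathrm{dist}(A,B)$, so $u$ is the desired best proximity point.
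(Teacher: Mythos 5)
Your argument is correct and follows the same skeleton as the paper's proof: a minimal cyclic pair $(K_1,K_2)$ produced by Zorn's lemma together with property (C), the identities $\overline{co}(T(K_2))=K_1$ and $\overline{co}(T(K_1))=K_2$ forced by minimality, and the contraction inequality driving $\delta(K_1,K_2)$ down to $\mathrm{dist}(A,B)$. Where you diverge is the final step. The paper does not invoke the diameter-preservation lemma at this point; instead it fixes $u\in K_1$, notes that $T(K_2)\subseteq B(Tu;k\delta(K_1,K_2)+(1-k)\mathrm{dist}(A,B))$ and hence, closed balls being convex, that $K_1=\overline{co}(T(K_2))$ lies in that ball, and then runs a second minimality argument on the admissible sets $L_1=\{u\in K_1:\delta_u(K_2)\le k\delta(K_1,K_2)+(1-k)\mathrm{dist}(A,B)\}$ and its analogue $L_2$ (each an intersection of closed balls with $K_1$ or $K_2$, hence closed, convex and cyclically $T$-invariant) to upgrade the estimate from points of $T(K_1)$ to all of $K_2$. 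Your route through $\delta(\overline{co}(S),\overline{co}(S'))=\delta(S,S')$ reaches the same inequality in one stroke and is arguably cleaner; the paper itself uses exactly this lemma in Sections 3 and 4. One caution on your sketch of that lemma: in a convex metric space the closed convex hull is the intersection of all closed convex supersets, not merely the closure of the set of iterated $W$-combinations, so ``iterating the defining inequality'' is not quite the right mechanism. The correct and shorter proof is the ball argument: for $z\in S'$ one has $S\subseteq B(z;\delta(S,S'))$, and since closed balls in $(X,d,W)$ are closed and convex this forces $\overline{co}(S)\subseteq B(z;\delta(S,S'))$; repeating with the roles of the two sets reversed gives the lemma. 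With that repair your proof is complete.
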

\begin{proof}
Let $\Omega$ denote the set of all nonempty, bounded, closed and
convex pairs $(E,F)$ which are subsets of $(A,B)$ and such that $T$
is cyclic on $E\cup F$. Note that $(A,B)\in \Omega$. Also, $\Omega$ is
partially ordered by the reverse inclusion, that is $(E_1,F_1)\leq
(E_2,F_2)\Leftrightarrow (E_2,F_2)\subseteq (E_1,F_1)$. By the fact
that $X$ has property (C) every increasing chain in $\Omega$ is
bounded above. So by using Zorn's lemma we obtain a minimal element
say $(K_1,K_2)\in \Omega$. We note that
$\left(\overline{con}(T(K_2)),\overline{co}(T(K_1))\right)$ is a
nonempty, bounded, closed and convex pair in $X$ and
$\left(\overline{con}(T(K_2)),\overline{co}(T(K_1))\right)\subseteq(K_1,K_2)$.
Further,
$$T(\overline{con}(T(K_2)))\subseteq T(K_1)\subseteq
\overline{con}(T(K_1)),$$ and also,
$$T(\overline{con}(T(K_1)))\subseteq \overline{con}(T(K_2))),$$
that is, $T$ is cyclic on $\overline{con}(T(K_2))\cup
\overline{con}(T(K_1))$. It now follows from the minimality of $(K_1,K_2)$
that
$$\overline{con}(T(K_2))=K_1,\hspace{.2 cm} \overline{con}(T(K_1))=K_2,$$
Let $u\in K_1$, then $K_2\subseteq B(u,\delta_u(K_2))$. Now, if $v\in K_2$
we have
$$d(Tu,Tv)\leq k d(u,v)+(1-k) dist(A,B)\leq k\delta(K_1,K_2)+(1-k)dist(A,B),\hspace{.2cm}k\in(0,1)$$
Hence, for all $v\in K_2$ we have
$$Tv\in B(Tu;k\delta(K_1,K_2)+(1-k)dist(A,B)),$$
and so,
$$T(K_2)\subseteq B(Tu;k\delta(K_1,K_2)+(1-k)dist(A,B)).$$
Thus,
$$K_1=\overline{con}(T(K_2))\subseteq B(Tu;k\delta(K_1,K_2)+(1-k)dist(A,B)).$$
Therefore
$$d(z,Tu)\leq k\delta(K_1,K_2)+(1-k)dist(A,B),\hspace{.5 cm} \forall z\in K_1.$$
This implies that
\begin{equation}\label{star_1}
\delta_{Tu}(K_1)\leq k\delta(K_1,K_2)+(1-k)dist(A,B).
\end{equation}
Similarly, if $v\in K_2$, we conclude that
\begin{equation}\label{star_2}
\delta_{Tv}(K_2)\leq k\delta(K_1,K_1)+(1-k)dist(A,B).
\end{equation}
If we put
$$L_1:=\{u\in K_1; \delta_u(K_2)\leq k \delta(K_1,K_2)+(1-k)dist(A,B)\},$$
$$L_2:=\{v\in K_2; \delta_v(K_1)\leq k \delta(K_1,K_2)+(1-k)dist(A,B)\},$$
then $T(K_2)\subseteq L_1$  and $T(K_1)\subseteq L_2$. Beside, it is easy to
check that
$$L_1=\bigcap_{v\in K_2} B(v;k\delta(K_1,K_2)+(1-k)dist(A,B)) \bigcap K_1,$$
$$L_2=\bigcap_{u\in K_1} B(u;k\delta(K_1,K_2)+(1-k)dist(A,B)) \bigcap K_2.$$
Further, if $u \in L_1$ then by \eqref{star_1}, $Tu\in L_2$, i.e.
$T(L_1)\subseteq L_2$ and also by the relation \eqref{star_2}
$T(L_2)\subseteq L_1$, that is, $T$ is cyclic on $L_1\cup L_2$. It now
follows from the minimality of $(K_1,K_2)$ that $L_1=K_1$ and $L_2=K_2$. This
deduces that
$$\delta_u(K_2)\leq k \delta(K_1,K_2)+(1-k)dist(A,B),\hspace{.5 cm} \forall u\in K_1.$$
We have
$$\delta(K_1,K_2)=\sup_{u\in K_1}\delta_u(K_2)\leq k \delta(K_1,K_2)+(1-k)dist(A,B),$$
and then $\delta(K_1,K_2)=dist(A,B)$. Therefore for each pair
$(u^*,v^*)\in K_1\times K_2$ we must have
$$d(u^*,Tu^*)=d(Tv^*,v^*)=dist(A,B)$$
\end{proof}
\section{Relatively Kannan nonexpansive mappings}
In this section we investigate the existence of best proximity points
for relatively Kannan nonexpansive mappings in the setting of convex
metric spaces.
\begin{definition}\label{def4_5}\cite{11}
A mapping $T:K\rightarrow K$ is said to be Kannan nonexpansive,
provided that
$$d(Tu,Tv)\leq \frac{1}{2}\left[d(u,Tu)+d(v,Tv) \right],$$
for all $u,v \in K$. Notice that mappings of the above type may or
may not be nonexpansive in the usual case. In fact, the Kannan
condition does not even imply continuity of the mapping.
\end{definition}

\begin{definition}\label{def4_6}\cite{8}
 Let $(A,B)$ be a nonempty pair of subsets of a metric space
$(X,d)$. We say that the pair $(A,B)$ is proximally compact
provided that every net $(\{u_{\alpha}\},\{v_{\alpha}\})$ of
$A\times B$ satisfying the condition that
$d(u_{\alpha},v_{\alpha})\rightarrow dist(A,B)$, has a convergent
subnet in $A\times B$.
\end{definition}
It is clear that if $(K_1,K_2)$ is a compact pair in a metric space
$(X,d)$ then $(K_1,K_2)$ is proximally compact. Next we shall present the
main result of this section.

\begin{theorem}\label{th4_2}
Let $(A,B)$ be a nonempty, bounded, closed and convex pair in a
convex metric space $(X,d,W)$ such that $A_0$ is nonempty and
$(A,B)$ is proximally compact. Suppose that $T:A\cup B\rightarrow
A\cup B$ is a relatively Kannan nonexpansive mapping and $(A,B)$ has
the P.Q-N.S. Moreover, let $T$ be a
cyclic contraction and $X$ has the property (C). Then $T$ has a
best proximity point.
\end{theorem}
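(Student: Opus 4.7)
The plan is to transport the Zorn's lemma / minimal invariant pair strategy used in the previous theorem on cyclic contractions to the present setting. I would let $\Omega$ be the family of all nonempty, bounded, closed, convex pairs $(E,F) \subseteq (A,B)$ on which $T$ is cyclic, partially ordered by reverse inclusion. Property (C) guarantees that every increasing chain in $\Omega$ has an upper bound, so Zorn's lemma produces a minimal element $(K_1, K_2) \in \Omega$. Since $(\overline{con}(T(K_2)), \overline{con}(T(K_1)))$ is itself a member of $\Omega$ contained in $(K_1, K_2)$, minimality gives $\overline{con}(T(K_2)) = K_1$ and $\overline{con}(T(K_1)) = K_2$.

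Next, exploiting that $T$ is a cyclic contraction, I would introduce the level sets
\[
L_1 := \{u \in K_1 : \delta_u(K_2) \leq k\,\delta(K_1, K_2) + (1-k)\,dist(A, B)\},
\]
\[
L_2 := \{v \in K_2 : \delta_v(K_1) \leq k\,\delta(K_1, K_2) + (1-k)\,dist(A, B)\},
\]
and verify, exactly as in the proof of the Section 2 theorem, that $T(K_2) \subseteq L_1$, $T(K_1) \subseteq L_2$, and hence that $T$ is cyclic on $L_1 \cup L_2$. Minimality of $(K_1, K_2)$ then forces $L_i = K_i$ for $i=1,2$, so taking the supremum over $u \in K_1$ in the defining inequality of $L_1$ yields $\delta(K_1, K_2) \leq k\,\delta(K_1, K_2) + (1-k)\,dist(A,B)$, which collapses to $\delta(K_1, K_2) = dist(A, B)$. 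Since $A_0 \neq \emptyset$ ensures $K_1$ is nonempty, any $u^* \in K_1$ then satisfies $d(u^*, Tu^*) \leq \delta(K_1, K_2) = dist(A, B)$, and hence $u^*$ is a best proximity point.

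The main technical obstacle will be in the second step: one must verify that the sets $L_1, L_2$ are closed and convex with respect to the abstract convex structure $W$ (rather than a linear structure), so that $\overline{con}(T(K_{3-i})) \subseteq L_i$ can be invoked and minimality applied. The relatively Kannan nonexpansive hypothesis, the P.Q-N.S., and the proximal compactness of $(A, B)$ are available as a supporting route: if a less direct argument were pursued, one would apply P.Q-N.S. to the minimal pair $(K_1, K_2)$ to extract a proximal pair $(u^*, v^*) \in K_1 \times K_2$ with $d(u^*, v^*) = dist(A, B)$, then use the Kannan inequality $d(Tu^*, Tv^*) \leq \tfrac{1}{2}[d(u^*, Tu^*) + d(v^*, Tv^*)]$ together with proximal compactness to extract a convergent subnet of $\{T^n u^*\}$ whose limit is a best proximity point.
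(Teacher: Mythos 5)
Your argument is correct, but it is not the paper's argument. You run the minimal-pair and level-set scheme of the Section~2 theorem on cyclic contractions, and in doing so you use only the hypotheses that $(A,B)$ is bounded, closed and convex, that $T$ is a cyclic contraction, and that $X$ has property (C); in effect you observe that the present theorem is already a corollary of the Section~2 result, so the remaining hypotheses ($A_0\neq\emptyset$, proximal compactness, the relatively Kannan condition, and P.Q-N.S.) are never touched. That is a legitimate proof, and the one technical point you flag --- closedness and convexity of $L_1,L_2$ with respect to the structure $W$ --- is settled by the fact recorded in the paper's preliminaries that closed balls in a convex metric space are convex, since $L_1$ is the intersection of $K_1$ with a family of closed balls. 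The paper's own proof is genuinely different and leans on exactly the hypotheses you bypass: its $\Omega$ consists of pairs $(C,D)$ that additionally contain a pair of points at distance $dist(A,B)$ (here $A_0\neq\emptyset$ puts $(A,B)$ into $\Omega$, and proximal compactness is what lets this property survive intersection along a chain); the minimal pair $(E_1,E_2)$ is then analyzed through the sets $E_i^r=\{u: d(u,Tu)\le r\}$ and $L_i^r=\overline{con}(T(E_i^r))$, with the Kannan inequality giving both $L_1^r\subseteq E_2^r$ and $\delta(L_1^r,L_2^r)\le r$, the cyclic-contraction hypothesis used only to keep $dist(L_1^r,L_2^r)=dist(A,B)$, property (C) used to pass to the level $r_0=\inf\{d(u,Tu): u\in E_1\cup E_2\}$, and P.Q-N.S. invoked at the very end to rule out $r_0>dist(A,B)$. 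Your route is shorter and exposes that the theorem as literally stated adds nothing beyond the Section~2 result; the paper's route is the one that exercises the Kannan/P.Q-N.S./proximal-compactness machinery and would survive if the cyclic-contraction assumption were weakened to anything that merely preserves the distance $dist(A,B)$ under $T$. Your closing sketch of a ``less direct'' alternative via iterating $T$ on a proximal pair is closer in spirit to the paper's proof but is too undeveloped to assess.
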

\begin{proof}
Let $\Omega$ denote the set of all nonempty, closed and convex pairs
$(	C,D)$ which are subsets of $(A,B)$ and such that $T$ is cyclic on
$C\cup D$ and $d(u,v) = dist(A,B)$ for some $(u,v)\in C\times D$.
Note that $(A,B)\in \Omega$ by the fact that $A_0$ is nonempty. Also,
$\Omega$ is partially ordered by the reverse inclusion. Assume that
$\{(C_{\alpha},D_{\alpha})\}_{\alpha}$ is a descending chain in
$\Omega$. Set $C:=\bigcap C_{\alpha}$ and $D:=\bigcap D_{\alpha}$.
Since $X$ has the property (C), we conclude that $(C,D)$ is a
nonempty pair. Also, by Proposition 2.4 of \cite{9}, $(C,D)$ is a closed
and convex pair. Moreover,
$$T(C)=T(\bigcap C_{\alpha})\subseteq \bigcap T( C_{\alpha})\subseteq \bigcap D_{\alpha}=D$$
Similarly, we can see that $T(D)\subseteq C$, that is, $T$ is cyclic
on $C\cup D$. Now, let $(u_{\alpha}, v_{\alpha})\in C_{\alpha}\times
D_{\alpha}$ be such that $d(u_{\alpha}, v_{\alpha}) = dist(A,B)$.
Since $(A,B)$ is proximally compact, $(u_{\alpha}, v_{\alpha})$
has a convergent subsequence say $(u_{\alpha_i} , v_{\alpha_i})$
such that $u_{\alpha_i}\rightarrow u\in A$ and
$v_{\alpha_i}\rightarrow v \in N$. Thus,
$$d(u,v)=\lim_{i}d(u_{\alpha_i},v_{\alpha_i})=dist(A,B).$$
Therefore, there exists an element $(u, v)\in C\times D$ such that
$d(u,v) = dist(A,B)$. Hence, every increasing chain in $\Omega$ is
bounded above with respect to reverse inclusion relation. Then by
using Zorn's Lemma we can get a minimal element say $(E_1,E_2)$. Let
$r> 0$ be such that $r \geq dist(A,B)$ and consider $(u^*, v^*) \in
E_1 \times E_2$ such that
$$d(u^*,v^*)=dist(A,B),\hspace{.2 cm} d(u^*,Tu^*)\leq r \hspace{.2 cm} and \hspace{.2 cm} d(Tv^*,v^*)\leq r.$$
Set,
$$E_1^r=\{u\in E_1:d(u,Tu)\leq r\},\hspace{.2 cm} E_2^r=\{u\in E_2:d(Tu,u)\leq r\},$$
and put $L_1^r:= \overline{con}(T(E_1^r )), L_2^r :=
\overline{con}(T(E_2^r ))$. We prove that $T$ is cyclic on
$L_1^r\cup L_2^r$. At first, we show that $L_1^r\subseteq E_2^r$.
Let $u\in L_1^r$. If $d(Tu,u) = dist(A,B)$, then $u \in E_2^r$. Let
$d(Tu, u)> dist(A,B)$. Put, $h:= sup\{d(Tz, Tu) : z \in E_1^r\}$. We
note that $B(Tu;h) \supseteq T(E_1^r)$, that is, $L_1^r \subseteq
B(Tu; h)$. Since $u\in L_1^r$, we have $d(Tu, u) \leq h$. It now
follows from the definition of $h$ that for each $\epsilon > 0$
there exists $z\in E_1^r$ such that $h-\epsilon \leq d(Tz, Tu)$.
Therefore,
$$d(Tu,u)-\epsilon\leq h-\epsilon\leq d(Tz,Tu)$$
$$\leq \frac{1}{2}[d(z,Tz)+d(Tu,u)]\leq \frac{1}{2}d(Tu,u)+\frac{1}{2}r.$$
Hence, $d(Tu, u) \leq r + 2\epsilon$ and so, $u \in E_2^r$. Thus,
$L_1^r \subseteq E_2^r$ . This implies that
$$T(L_1^r)\subseteq T(E_2^r)\subseteq \overline{con}(T(E_2^r))=L_2^r.$$
Similarly, we can see that $T(L_2^r)\subseteq L_1^r$, that is, $T$
is cyclic on $L_1^r\cup L_2^r$. We now claim that $\delta(L^r_1
,L^r_ 2 ) \leq r$. By using Lemma 5.1 we have
$$\delta(L^r_1,L^r_ 2 )=\delta (\overline{con}(T(E_1^r)),\overline{con}(T(E_2^r)))$$
$$=\delta ((T(E_1^r)),(T(E_2^r)))=\sup \{d(Tu,Tv):u \in E_1^r,v\in E_2^r\}$$
$$\leq \sup\{\frac{1}{2}[d(u,Tu)+d(Tv,v)]:u\in E_1^r,v\in E_2^r\}$$
$$\leq\frac{1}{2}[r+r]=r.$$
Besides, since $(u^*,v^*)\in E_1\times E_2$ such that $d(u^*, v^*) =
dist(A,B)$, for any $\theta> 0$, we have $d(u^* ,v^*) < \theta +
dist(A,B)$. By the fact that $T$ is cyclic contraction,
$$dist(A,B)\leq d(Tu^*,Tv^*)<\alpha d(u^*,v^*) +(1-\alpha) dist(A,B) = dist(A,B),$$
This implies that $d(Tu^*; Tv^*) =
dist(A,B)$ and so, $dist(L^r_2 ,L^r_ 1 ) = dist(A,B)$. Let
$$r_0:=\inf\{d(u,Tu):u\in E_1\cup E_2\}.$$
Then $r_0 \geq dist(A,B)$. Let $r_n$ be a nonnegative sequence such
that $r_n\downarrow r_0$. Thus,
$\left(\{L_1^{r_n}\},\{L_2^{r_n}\}\right)$ are descending sequences
of nonempty, bounded, closed and convex subsets of $(E_1, E_2)$. Since
$X$ has the property (C),
$$L_1^{r_0}=\bigcap_{n=1}^{\infty} L_1^{r_n}\neq \emptyset, \hspace{.2 cm} L_2^{r_0}=\bigcap_{n=1}^{\infty} L_2^{r_n}\neq \emptyset.$$
Moreover, by the preceding argument, $T : L_1^{r_0}\cup
L_2^{r_0}\rightarrow L_1^{r_0}\cup L_2^{r_0}$ is a cyclic mapping.
Further, since $dist(L_1^{r_n},L_1^{r_n} ) = dist(A,B)$ for all
$n\in N$, we deduce that $dist(L_2^{r_0},L_1^{r_0} ) = dist(A,B)$.
It now follows from the minimality of $(E_1,E_2)$ that $L_2^{r_0} =
E_1$ and $L_1^{r_0}=E_2$. Therefore, $d(u, Tu) \leq r_0$ for all
$u\in E_1 \cup E_2$. Assume that $r_0 > dist(A,B)$. Since the pair
$(A,B)$ has P.Q-N.S, there exists $(x, y)
\in E_1 \times E_2$ such that
$$d(x,v)<\delta(E_1,E_2)\leq r_0 \hspace{.2 cm}\&\hspace{.2 cm} d(u,y)<\delta(E_1,E_2)\leq r_0.$$
for all $(u, v) \in E_1 \times E_2$. Hence
$$d(x,Tx)<\delta(E_1,E_2)\leq r_0 \hspace{.2 cm}\&\hspace{.2 cm} d(Ty,y)<\delta(E_1,E_2)\leq r_0.$$
This is a contradiction, that is, $r_0 = dist(A,B)$ and so,
$$d(u,Tu)=d(v,Tv)=dist(A,B),$$
for all $(u, v) \in E_1 \times E_2$. This completes the proof.
\end{proof}
\section{Cyclic relatively nonexpansive mapping}
In this section we study the existence of best proximity points for cyclic relatively nonexpansive mappings in the sense that they are defined on the union of two subsets$A$ and $B$ of a Banach space $X$, $T(A)\subseteq B$ , $T(B)\subseteq A$ and satisfy $\|Tu-Tv\|\leq \|u-v\|$ for all $(u,v)\in
A\times B$.
Eldred et. al in \cite{6} proved the existence of best proximity points for relatively nonexpansive mappings with the proximal normal structure.
We shall obtain the same result under the weaker condition (P. Q-N. S).
\begin{definition}\label{def3.1}\cite{6}
A pair $(A,B)$ of subsets of a linear space $X$ is said to be a
proximal pair if for each $(u,v)\in A\times B$ there exists
$(\acute{u},\acute{v})\in A\times B$ such that
$$\|u-\acute{v}\|=\|\acute{u}-v\|=dist(A,B).$$
\end{definition}
We use $(A_0,B_0)$ to denote the proximal pair obtained from $(A,B)$ upon setting
$$A_0=\{u\in A :\|u-\acute{v}\|=dist(A,B)\, \text{for some}, \hspace{.1cm}  \acute{v} \in B\}\hspace*{.5cm}$$
and
$$B_0=\{v\in M :\|u-\acute{u}\|=dist(A,B)\, \text{for some}\, \hspace{.1cm}  \acute{u}\in A\}.$$

By using the geometric property of proximal normal structure Eldred et. al established the
following theorem. Before we mention the main theorem of \cite{6},
we recall the following definition.

\begin{definition}\label{def3.3}(\cite{6})
Let $(A,B)$ be a nonempty pair of a normed linear space $X$ and
$T:A\cup B\rightarrow A\cup B$ be a mapping. We say that $T$ is cyclic
relatively nonexpansive if $T(A)\subseteq B$ , $T(B)\subseteq A$ and  $\|Tu-Tv\|\leq \|u-v\|$ for all $(u,v)\in
A\times B$.
\end{definition}

\begin{theorem}\label{th3.4}
(\cite{6}) Let $(A,B)$ be a nonempty, weakly compact convex pair in
a Banach space $X$, and suppose $(A,B)$ has proximal normal
structure. Assume that $T:A\cup B\rightarrow A\cup B$ is a cyclic
relatively nonexpansive mapping. Then $T$ has a best proximity point
in both $A$ and $B$, that is, there exists $(u^*,v^*)\in A\times B$
such that $\|u^*-Tu^*\|=\|Tv^*-v^*\|=dist(A,B)$.
\end{theorem}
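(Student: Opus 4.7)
The plan is to adapt the classical Zorn's lemma argument of Eldred et al. Let $\Omega$ denote the collection of all nonempty, weakly compact, convex pairs $(K_1,K_2) \subseteq (A,B)$ on which $T$ is cyclic and which contain a proximal pair, i.e.\ $dist(K_1,K_2) = dist(A,B)$. To see $\Omega \neq \emptyset$, observe that cyclic relative nonexpansivity forces $T(A_0) \subseteq B_0$ and $T(B_0) \subseteq A_0$: if $\|u-v\| = dist(A,B)$, then $\|Tu-Tv\| \leq \|u-v\| = dist(A,B)$. Hence $(\overline{co}(A_0),\overline{co}(B_0))$ belongs to $\Omega$. Order $\Omega$ by reverse inclusion. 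For any descending chain the intersection remains weakly compact, closed and convex, and contains a proximal pair: pick $(u_\alpha,v_\alpha)$ realizing $dist(A,B)$ in each member, pass to a weakly convergent subnet, and invoke weak lower semicontinuity of the norm. Zorn's lemma now produces a minimal $(K_1,K_2)\in\Omega$.

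The first consequence of minimality is that $\overline{co}(T(K_2)) = K_1$ and $\overline{co}(T(K_1)) = K_2$. Indeed, $\bigl(\overline{co}(T(K_2)),\overline{co}(T(K_1))\bigr)$ is a weakly compact convex subpair of $(K_1,K_2)$ on which $T$ is still cyclic, and if $(u,v)\in K_1\times K_2$ realizes $\|u-v\| = dist(A,B)$, then $(Tv,Tu)$ realizes the same distance inside the smaller pair; minimality gives the claimed equalities. Next, suppose for contradiction that $\delta(K_1,K_2) > dist(A,B)$. Proximal normal structure applied to the convex proximal subpair $(K_1,K_2)$ yields $(x_0,y_0)\in K_1\times K_2$ with $\|x_0-y_0\| = dist(A,B)$ and some $r < \delta(K_1,K_2)$ such that
\[
\sup_{v\in K_2}\|x_0-v\|\le r \quad\text{and}\quad \sup_{u\in K_1}\|u-y_0\|\le r.
\]
Set
\[
L_1 := \{u\in K_1:\delta_u(K_2)\le r\},\qquad L_2 := \{v\in K_2:\delta_v(K_1)\le r\}.
\]
Each $L_i$ is a nonempty (containing $x_0$ or $y_0$), closed, convex subset of $K_i$, being an intersection of closed balls with $K_i$. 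Once I establish cyclic invariance $T(L_1)\subseteq L_2$ and $T(L_2)\subseteq L_1$, combined with $\|Tx_0-Ty_0\| = dist(A,B)$, the pair $(L_1,L_2)$ belongs to $\Omega$, and minimality forces $L_1=K_1$, $L_2=K_2$, yielding $\delta(K_1,K_2)\le r < \delta(K_1,K_2)$, a contradiction. Therefore $\delta(K_1,K_2) = dist(A,B)$, so every $u\in K_1$ and $v\in K_2$ is a best proximity point.

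The main obstacle I foresee is precisely the cyclic invariance $T(L_1)\subseteq L_2$: the definition of $L_1$ bounds $\|u-v\|$ only for $v\in K_2$, yet placing $Tu$ in $L_2$ demands controlling $\|Tu-w\|$ for every $w\in K_1$. Bridging this gap is exactly where the identity $K_1 = \overline{co}(T(K_2))$ derived from minimality is needed. For $u\in L_1$ and any $v\in K_2$ one has $\|Tu-Tv\|\le \|u-v\|\le r$, so $T(K_2)\subseteq \overline{B}(Tu;r)$; convexity and closure of the closed ball then propagate this to $K_1 = \overline{co}(T(K_2))\subseteq \overline{B}(Tu;r)$, i.e.\ $\delta_{Tu}(K_1)\le r$ and $Tu\in L_2$. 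The symmetric inclusion $T(L_2)\subseteq L_1$ follows by the same pattern.
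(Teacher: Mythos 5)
Your argument follows the classical Eldred--Kirk--Veeramani route: a minimal weakly compact convex pair $(K_1,K_2)$, the identities $\overline{co}(T(K_2))=K_1$ and $\overline{co}(T(K_1))=K_2$, and the admissible sets $L_i$ cut out by $\delta_u(K_2)\le r$. The step you flagged as the main obstacle is handled correctly: for $u\in L_1$ one gets $T(K_2)\subseteq B(Tu;r)$ and then $K_1=\overline{co}(T(K_2))\subseteq B(Tu;r)$ because balls are closed and convex, so $Tu\in L_2$. This is genuinely different from the proof the paper gives for its own version of the result (the P.Q-N.S.\ analogue in the same section), which works instead with the sets $\{x:\|x-Tx\|\le r\}$ and extracts a contradiction from $r_0=\inf\|x-Tx\|$; that formulation is designed to survive the weakening to quasi-normal structure, where your uniform radius $r<\delta(K_1,K_2)$ is not available. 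Under the stated hypothesis of P.N.S.\ your route is the natural one.

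There is, however, a genuine gap where you invoke proximal normal structure. The definition (recalled in the paper's last section, and the same in \cite{6}) yields $(x_0,y_0)\in K_1\times K_2$ with $\delta_{x_0}(K_2)<\delta(K_1,K_2)$ and $\delta_{y_0}(K_1)<\delta(K_1,K_2)$; it does \emph{not} assert $\|x_0-y_0\|=dist(A,B)$, and nothing in your argument supplies this. Yet that equality is exactly what you use (via $\|Tx_0-Ty_0\|=dist(A,B)$) to conclude $dist(L_1,L_2)=dist(A,B)$; without it $(L_1,L_2)$ is not known to lie in $\Omega$ and minimality cannot be applied. The standard repair: first show the minimal pair is proximal, by applying minimality to $((K_1)_0,(K_2)_0)$ (this is also needed before P.N.S.\ can be applied to $(K_1,K_2)$ at all, since the definition quantifies over \emph{proximal} subpairs); then pick $x_0'\in K_1$, $y_0'\in K_2$ with $\|x_0'-y_0\|=\|x_0-y_0'\|=dist(A,B)$ and pass to the midpoints $z_1=\frac{1}{2}(x_0+x_0')$, $z_2=\frac{1}{2}(y_0+y_0')$. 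Then $\|z_1-z_2\|\le\frac{1}{2}\|x_0-y_0'\|+\frac{1}{2}\|x_0'-y_0\|=dist(A,B)$, while $\delta_{z_1}(K_2)\le\frac{1}{2}\bigl(r+\delta(K_1,K_2)\bigr)<\delta(K_1,K_2)$ and likewise for $z_2$; running your argument with this proximal nondiametral pair and the enlarged radius closes the gap. A smaller point: your nonemptiness witness $(\overline{co}(A_0),\overline{co}(B_0))$ tacitly uses that $A_0$ and $B_0$ are already convex and closed (true, but needed, since cyclicity of $T$ on $A_0\cup B_0$ does not by itself give cyclicity on the closed convex hulls).
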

Before we state the main result of this section, we recall the
following lemma.
\begin{lemma}\label{lem3.7}\cite{7}
Let $(F_1,F_2)$ be a nonempty pair of a normed linear space $X$.
Then
$$\delta(F_1,F_2)=\delta(\overline{co}(F_1),\overline{co}(F_2))$$.
\end{lemma}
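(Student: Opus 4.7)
The plan is to prove the nontrivial inequality $\delta(\overline{co}(F_1),\overline{co}(F_2)) \le \delta(F_1,F_2)$, since the reverse bound $\delta(F_1,F_2) \le \delta(\overline{co}(F_1),\overline{co}(F_2))$ is immediate from $F_i \subseteq \overline{co}(F_i)$.

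First I would handle one coordinate at a time, working initially with the algebraic convex hull before passing to its closure. Fix $v \in F_2$ and let $u \in \mathrm{co}(F_1)$, written as a finite convex combination $u=\sum_{i=1}^n \lambda_i u_i$ with $u_i \in F_1$. The triangle inequality and convexity of the coefficients give
$$\|u-v\| = \left\|\sum_{i=1}^n \lambda_i (u_i - v)\right\| \le \sum_{i=1}^n \lambda_i \|u_i - v\| \le \sum_{i=1}^n \lambda_i\, \delta(F_1,F_2) = \delta(F_1,F_2).$$
Hence $\sup_{v\in F_2}\|u-v\| \le \delta(F_1,F_2)$ for every $u\in \mathrm{co}(F_1)$.

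Next I would extend this to $u \in \overline{co}(F_1)$ by approximating: if $u_n \to u$ with $u_n \in \mathrm{co}(F_1)$, then for any fixed $v\in F_2$, $\|u-v\| = \lim_n \|u_n - v\| \le \delta(F_1,F_2)$. Now I repeat the same reasoning in the second slot: with $u \in \overline{co}(F_1)$ fixed and $v = \sum_j \mu_j v_j$ a convex combination of points $v_j \in F_2$, the triangle inequality yields $\|u-v\| \le \sum_j \mu_j \|u-v_j\| \le \delta(F_1,F_2)$, and a further limit passes this to $v \in \overline{co}(F_2)$. Taking the supremum over all $u,v$ gives the claimed inequality.

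There is no real obstacle here beyond bookkeeping; the only subtlety is to apply the triangle inequality one coordinate at a time and use the (sequential) continuity of the norm in each argument separately, rather than trying to pass to the closure in both slots simultaneously.
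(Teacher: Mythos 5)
Your proof is correct and complete. Note that the paper itself gives no proof of this lemma --- it is stated as a quotation from reference [7] --- so there is nothing to compare against except the standard argument, which is exactly what you wrote: the easy inequality from $F_i \subseteq \overline{co}(F_i)$, then the convex-combination estimate $\|u-v\| \le \sum_i \lambda_i \|u_i - v\| \le \delta(F_1,F_2)$ applied one coordinate at a time, followed by passage to the closure via continuity of the norm. Your remark that one should not try to handle both slots simultaneously is the right instinct; the only cosmetic slip is the phrase ``sequential continuity of the norm in each argument separately,'' which is just ordinary continuity of $x \mapsto \|x - v\|$, and the argument also covers the degenerate case $\delta(F_1,F_2)=\infty$ without change.
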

Now we prove the main result of this section.
\begin{theorem}\label{th4.2}
Let $(A,B)$  be a nonempty, weakly compact convex pair in
a Banach space $X$ and
suppose $(A,B)$ has P.Q-N.S. Assume that
$T:A\cup B \rightarrow A\cup B$ is a cyclic relatively nonexpansive
mapping. Then $T$ has a best proximity point in both $A$ and $B$,
that is, there exists $(u^*,v^*)\in A\times B$ such that
$\|u^*-Tu^*\|=\|Tv^*-v^*\|=dist(A,B)$.
\end{theorem}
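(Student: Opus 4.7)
The plan is to adapt the Zorn-minimality argument underlying Theorem~\ref{th3.4}, invoking proximal quasi-normal structure only at the decisive geometric step. Let $\Omega$ denote the collection of all nonempty, weakly compact, convex pairs $(C,D)\subseteq(A,B)$ such that $T$ is cyclic on $C\cup D$ and $(C,D)$ is proximal in the sense of containing a point pair at distance $dist(A,B)$. The pair $(A,B)$ lies in $\Omega$ since the weak compactness of a convex pair forces $A_0\ne\emptyset$. For a descending chain in $\Omega$, weak compactness makes the intersection nonempty, convex, and weakly compact; cyclicity is preserved by $T(\bigcap C_\alpha)\subseteq\bigcap T(C_\alpha)\subseteq\bigcap D_\alpha$; and a weak cluster point of proximal pairs $(c_\alpha,d_\alpha)\in C_\alpha\times D_\alpha$, combined with the weak lower semicontinuity of the norm, provides a proximal point in the intersection. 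Zorn's lemma then supplies a minimal element $(K_1,K_2)\in\Omega$.

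The second step is to show $K_1=\overline{co}(T(K_2))$ and $K_2=\overline{co}(T(K_1))$. Setting $F_1:=\overline{co}(T(K_2))$ and $F_2:=\overline{co}(T(K_1))$, the pair $(F_1,F_2)$ is weakly compact, convex, and contained in $(K_1,K_2)$, while $T(F_1)\subseteq T(K_1)\subseteq F_2$ (together with its symmetric counterpart) yields cyclicity on $F_1\cup F_2$. For the proximal condition, any $(u_0,v_0)\in K_1\times K_2$ with $\|u_0-v_0\|=dist(A,B)$ automatically satisfies $\|Tu_0-Tv_0\|=dist(A,B)$ by relative nonexpansiveness, and $(Tv_0,Tu_0)\in F_1\times F_2$ is a proximal pair. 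Hence $(F_1,F_2)\in\Omega$, and minimality of $(K_1,K_2)$ forces the claimed equalities.

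Finally, argue by contradiction that $\delta(K_1,K_2)=dist(A,B)$. Suppose instead $\delta(K_1,K_2)>dist(A,B)$. Proximal quasi-normal structure applied to $(K_1,K_2)$ supplies a proximal pair $(x,y)\in K_1\times K_2$ and a real number $r<\delta(K_1,K_2)$ with $\|x-v\|\le r$ for every $v\in K_2$ and $\|u-y\|\le r$ for every $u\in K_1$. Define
\[
L_1:=\{u\in K_1:\|u-v\|\le r\ \forall v\in K_2\},\qquad L_2:=\{v\in K_2:\|u-v\|\le r\ \forall u\in K_1\}.
\]
These are nonempty (containing $x$ and $y$), weakly compact, and convex (intersections of closed balls with $K_1$, respectively $K_2$), and $(x,y)\in L_1\times L_2$ is a proximal pair. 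The decisive verification is the $T$-cyclicity on $L_1\cup L_2$: for $u\in L_1$ and any $v\in K_2$, relative nonexpansiveness gives $\|Tu-Tv\|\le\|u-v\|\le r$, so $Tu$ lies within distance $r$ of every point of $T(K_2)$; passing to convex combinations and norm-limits and invoking the identity $K_1=\overline{co}(T(K_2))$ from the previous step, this bound persists for every point of $K_1$, whence $Tu\in L_2$. Symmetrically $T(L_2)\subseteq L_1$. Therefore $(L_1,L_2)\in\Omega$, yet $\delta(L_1,L_2)\le r<\delta(K_1,K_2)$ makes $(L_1,L_2)$ a strict subpair of $(K_1,K_2)$, contradicting minimality. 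Consequently $\delta(K_1,K_2)=dist(A,B)$, and for any $u^*\in K_1$ and $v^*\in K_2$ cyclicity yields $\|u^*-Tu^*\|=\|Tv^*-v^*\|=dist(A,B)$. The main obstacle is the $T$-cyclicity of $(L_1,L_2)$: this is the sole place where the identities $K_i=\overline{co}(T(K_{3-i}))$ are essential, and without them the minimality-based contradiction breaks down.
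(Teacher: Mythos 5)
There is a genuine gap at the decisive step, and it sits precisely at the point where proximal quasi-normal structure differs from proximal normal structure. You write that P.Q-N.S ``supplies a proximal pair $(x,y)\in K_1\times K_2$ and a real number $r<\delta(K_1,K_2)$ with $\|x-v\|\le r$ for every $v\in K_2$.'' That is the conclusion of P.N.S (Definition~\ref{def3.2}), namely $\delta_{x}(K_2)<\delta(K_1,K_2)$, not of P.Q-N.S (Definition~\ref{def3.5}), which only yields the pointwise strict inequality $\|x-v\|<\delta(K_1,K_2)$ for each individual $v\in K_2$. The supremum $\delta_x(K_2)=\sup_{v\in K_2}\|x-v\|$ may still equal $\delta(K_1,K_2)$, so no uniform radius $r<\delta(K_1,K_2)$ need exist; your sets $L_1,L_2$ may be empty for every such $r$, and the minimality contradiction collapses. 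The earlier parts of your argument (the Zorn step, the identities $K_i=\overline{co}(T(K_{3-i}))$, the convexity and cyclic invariance of the $L_i$) are sound, but as written the proof only establishes the theorem under the stronger hypothesis P.N.S, i.e.\ it reproves Theorem~\ref{th3.4} rather than Theorem~\ref{th4.2}.

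The paper circumvents exactly this obstacle by not attempting to shrink $\delta(K_1,K_2)$ below itself. Instead it runs the minimality argument on the displacement sets $K_i^r=\{x:\|x-Tx\|\le r\}$ and the pairs $L_i^r=\overline{co}(T(K_i^r))$, lets $r\downarrow r_0:=\inf\{\|x-Tx\|:x\in K_1\cup K_2\}$, and uses minimality of $(K_1,K_2)$ to conclude both $\|x-Tx\|\le r_0$ on all of $K_1\cup K_2$ and $\delta(K_1,K_2)\le r_0$. At that stage a contradiction with $r_0>dist(A,B)$ requires only a \emph{single} point $u_1$ satisfying $\|u_1-Tu_1\|<r_0$ (violating the infimum), and the pointwise inequality $\|u_1-Tu_1\|<\delta(K_1,K_2)\le r_0$ furnished by P.Q-N.S is exactly strong enough for that. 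To salvage your outline you would need to reroute the final contradiction through such a displacement infimum rather than through a uniform bound on $\delta(K_1,K_2)$.
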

\begin{proof}
It is not difficult to see that $(A_0,B_0)$ is a nonempty, weakly
compact convex pair and $dist(A,B)=dist(A_0,B_0)$. Moreover, $T$ is
cyclic on $A_0\cup B_0$ (for details, see \cite{6}). Let $\Omega$
denote the collection of all nonempty, weakly compact convex pairs
$(E,F)$ which are subsets of $(A,B)$ and $dist(E,F)=dist(A,B)$ and
$T$ is cyclic on $E\cup F$. Then $\Omega$ is nonempty, since
$(A_0,B_0)\in\Omega$. By using Zorn's lemma we can see that $\Omega$ has
a minimal element, say $(K_1,K_2)$ with respect to revers inclusion
relation and $dist(K_1,K_2)=dist(A,B)$. Now let $r$ be a real positive number such that $r\geq
dist(A,B)$ and let $(u,v)\in K_1\times K_2$ is such that
\begin{equation*}\label{eq_n_1}
\|u-v\|=dist(A,B),\hspace{.5 cm} \|u-Tu\|\leq r,
\end{equation*}
and $\|Tv-v\|\leq r$. Put
\begin{equation*}\label{eq_n_2}
K_1^r=\{x\in K_1: \|x-Tx\|\leq r\},\hspace{.5 cm} K_2^r=\{x\in K_2:
\|Tx-x\|\leq r\},
\end{equation*}
and set $L_1^r:=\overline{co}(T(K_1^r)))$,
$L_2^r:=\overline{co}(T(K_2^r)))$. We claim that $T$ is cyclic on
$L_1^r\cup L_2^r$. Firstly, We prove $L_1^r \subseteq K_2^r$. Let
$x\in L_1^r$. If $\|Tx-x\|=dist(A,B)$, then $x\in K_2^r$. Suppose
$\|Tx-x\|>dist(A,B)$. Set $S:=\sup\{\|Tw-Tx\|;w\in K_1^r\}$. Then
$T(k_1^r)\subseteq B(Tx;S)$. This implies that $L_1^r\subseteq
B(Tx;S)$. Since $x\in L_1^r$, we have $\|Tx-x\|\leq S$. By the
definition of $S$, for each $\epsilon >0$ there exists $w\in K_1^r$
such that $S-\epsilon\leq \|Tw-Tx\|$. Hence
\begin{equation*}\label{eq_n_3}
\|Tx-x\|-\epsilon \leq S-\epsilon\leq \|Tw-Tx\|\leq \|w-x\|\leq r.
\end{equation*}
This implies that $\|Tx-x\|\leq r-\epsilon$. Thus $x\in K_2^r$ and
hence $L_1^r\subseteq K_2^r$. Therefore
\begin{equation*}\label{eq_n_4}
T(L_1^r)\subseteq T(K_2^r)\subseteq \overline{co}(T(K_2^r))=L_2^r.
\end{equation*}
Similar argument shows that $T(L_2^r)\subseteq L_1^r$. Thus $T$ is
cyclic on $L_1^r\cup L_2^r$. We now prove that
$\delta(L_1^r,L_2^r)\leq r$. By Lemma~\ref{lem3.7} we have
\begin{equation*}\label{eq_n_5}
\begin{array}{ll}
\delta(L_1^r,L_2^r)&=\delta\left(\overline{co}(T(K_1^r)),T(K_2^r)\right)\\
&=\delta(T(K_1^r),T(K_2^r))\\
&=\sup\{\|Tx-Ty\|;x\in K_1^r,y\in K_2^r\}\\
&\leq \sup\{\|x-y\|;x\in K_1^r, y\in K_2^r\}\\
&\leq r.
\end{array}
\end{equation*}
On the other hand since $u\in K_1^r$, $v \in K_2^r$ and
$\|u-v\|=dist(A,B)$, we conclude that
\begin{equation*}\label{eq_n_6}
dist(A,B)\leq dist(L_2^r,L_1^r)\leq \|Tv-Tu\|\leq \|u-v\|=dist(A,B),
\end{equation*}
that is; $dist(L_2^r,L_1^r)=dist(A,B)$.\\
Put
\begin{equation*}\label{eq_n_7}
r_0=\inf\{\|x-Tx\|;x\in K_1\cup K_2\}.
\end{equation*}
Then $r_0\geq dist(A,B)$. Let $\{r_n\}$ be a nonnegative sequence
such that $r_n\downarrow r_0$. Thus $\{L_1^{r_n}\}$, $\{L_2^{r_n}\}$
are descending sequence of nonempty, weakly compact convex subsets
of $K_1, K_2$ respectively. By the weakly compactness of $K_1$ and
$K_2$ we must have
\begin{equation*}\label{eq_n_8}
L_1^{r_0}=\bigcap_{n=1}^{\infty}L_1^{r_n}\neq \emptyset, \hspace{.5
cm} L_2^{r_0}=\bigcap_{n=1}^{\infty}L_2^{r_n}\neq \emptyset.
\end{equation*}
Also by the preceding argument $T:L_1^{r_0}\cup L_2^{r_0}\rightarrow
L_1^{r_0}\cup L_2^{r_0}$ is a cyclic mapping. Moreover, Since
$dist(L_2^{r_n},L_1^{r_n})=dist(A,B)$ for all $n\in \mathbb{N}$, we
conclude that $dist(L_2^{r_0},L_1^{r_0})=dist(A,B)$. The minimality
of $(K_1,K_2)$ implies that $L_2^{r_0}=K_1$ and $L_1^{r_0}=K_2$.
Hence $\|x-Tx\|\leq r_0$ for all $x \in K_1 \cup K_2$. Now let
$r_0>dist(A,B)$. Since $(A,B)$ has P.Q-N.S,
there exists $(u_1,v_1)\in K_1 \times K_2$ such that
\begin{equation*}\label{eq_n_9}
\|u_1-y\|<\delta(K_1,K_2)\leq r_0,\hspace{.5 cm}
\|x-v_1\|<\delta(K_1,K_2)\leq r_0,
\end{equation*}
for all $(x,y)\in K_1 \times K_2$. Therefore
\begin{equation*}\label{eq_n_10}
\|u_1-Tu_1\|<\delta(K_1,K_2)\leq r_0\hspace{.5 cm}  \& \hspace{.5
cm} \|Tv_1-v_1\|<\delta(K_1,K_2)\leq r_0,
\end{equation*}
which is a contradiction. This implies that $r_0=dist(A,B)$ and
hence
\begin{equation*}\label{eq_n_11}
\|x-Tx\|=\|Ty-y\|=dist(A,B),
\end{equation*}
for all $(x,y)\in K_1 \times K_2$.
\end{proof}

\begin{example}\label{ex_n_1}
Let $X=\mathbb{R}$ with the usual metric and $A=[0,2]$ and
$B=[3,4]$. Define $T:A\cup B \rightarrow A\cup B$ with
\begin{equation*}\label{eq_n_12}
T(x)=\left\{\begin{array}{lll}
4&if&x=0,\\
3&if&x\neq 0,x\in A,\\
2&if&x\in B.\\ \end{array} \right.
\end{equation*}
It is easy to check that $T$ is cyclic on $A\cup B$ and be a
relatively nonexpansive mapping. Thus by theorem~\ref{th4.2}, $T$
has a best proximity point in $A\cup B$.
\end{example}

\section{Orbitally nonexpansive mapping}
Several definitions of generalized nonexpansive mappings are
concerned with the iterates of the mapping under consideration and
hence they are related to the behavior of its orbits. For instance,
a mapping $T:K\rightarrow K$ is said to be asymptotically
nonexpansive (see \cite{10}) if for all $u,v \in K,
\|T^n(u)-T^n(v)\leq k_n\|u-v\|$, where $(k_n)$ is a sequence of real
numbers such that $\lim k_n=1$.
Enrique. L in \cite{5} introduced the orbitally nonexpansive mappings which some of orbits behave as a kind of attractor. In this section, we recall the definition of these mappings and propositions of \cite{5}. We consider the existence of a fixed point for this category of mappings. Enrique. L in \cite{5} has examined the existence of a fixed point for orbitally nonexpansive mappings with the P.N.S. Here, we want here to have a fixed point for this class of mappings under a weaker condition called P.Q-N.S.

\begin{definition}\label{def3.2}\cite{6}
A convex pair $(K_1,K_2)$ in a Banach space $X$ is said to have
proximal normal structure (P.N.S) if for any bounded, closed and convex
proximal pair $(H_1,H_2)\subseteq (K_1,K_2)$ for which
$dist(H_1,H_2)=dist(K_1,K_2)$ and $\delta(H_1,H_2)>dist(H_1,H_2)$,
there exits $(u_1,u_2)\in H_1\times H_2$ such that
$$\delta_{u_1}(H_2)<\delta(H_1,H_2),\hspace{.5 cm} \delta_{u_2}(H_1)<\delta(H_1,H_2).$$
\end{definition}
We now recall the notion of proximal quasi-normal structure from \cite{7}.
\begin{definition}\label{def3.5} \cite{7}
A convex pair $(K_1,K_2)$ in a Banach space $X$ is said to have
proximal quasi-normal structure (P.Q-N.S) if for any bounded, closed and
convex proximal pair $(H_1,H_2)\subseteq (K_1,K_2)$ for which
$dist(H_1,H_2)=dist(K_1,K_2)$ and $\delta(H_1,H_2)>dist(H_1,H_2)$,
there exits $(p_1,p_2)\in H_1 \times H_2$ such that
$$d(p_1,v)<\delta(H_1,H_2), \hspace{.5 cm} d(u,p_2)<\delta(H_1,H_2),$$
for all $(u,v)\in H_1\times H_2$.
\end{definition}
It follows from Definition 2.2 that for a convex subset $K$ of a
Banach space $X$, the pair $(K, K)$ has P.Q-N.S if and only if $K$ has quasi-normal structure. Moreover,
$$P.N.S \Rightarrow P.Q-N.S.$$
Notice that each pair of convex, closed and bounded pairs in uniformly convex Banach space has P.Q-N.S. \cite{7}
\begin{example}\label{ex3.6}
Let $(A,B)$ be a compact pair in a
Banach space $X$. Then $(A,B)$ has P.Q-N.S.
\end{example}
\begin{proof}
By Proposition 2.2 of \cite{6}, $(A,B)$ has P.N.S and thus has P.Q-N.S.
\end{proof}

\begin{definition}\label{def3.1_6}\cite{5}
Let $(X,\|.\|)$is a real Banach space and $C$ is nonempty, closed, convex and bounded subset
of $X$. A mapping $T:C\rightarrow C$ is said be orbitally nonexpansive if
for every nonempty, closed, convex, T-invariant subset $D$ of $C$,
there exists some $u_0\in D$ such that for every $u\in D$,
\begin{equation}\label{eq3.1}
\limsup_{n\rightarrow \infty}\|T^n(u_0)-T(u)\|\leq
\limsup_{n\rightarrow \infty}\|T^n(u_0)-u\|
\end{equation}
\end{definition}
By the above definition, it is clear that each nonexpansive mapping is orbitally nonexpansive.\cite{5}

Notice that orbitally nonexpansive mapping need not be continuous as
(\cite{17}, Example 1.1) and (\cite{14}, Proposition 3.4).
\begin{example}\label{ex3.4}
Let $T:[0,1]\rightarrow [0,1]$ be given by $T(u)=u^2$. The
closed convex $T$-invariant subsets of $[0, 1]$ are just all the
intervals of the form $[a,1]$ with $a\in [0,1]$. Choosing $u_0=0$
gives $T^n(u_0)=0$. Therefore, for every $u\in[a,1]$,
$$\limsup_{n\rightarrow \infty}|T^n(u_0)-T(u)|=|0-u^2|\leq |0-u|=\limsup_{n\rightarrow \infty}|T^n(u_0)-u|$$
and $T$ is an orbitally nonexpansive mapping.
\end{example}
Now we prove the main result of this section.

\begin{theorem}\label{th4.1}
Let $C$ be a nonempty weakly compact convex subset of a Banach space
$(X,\|.\|)$. Let
$T:C\rightarrow C$ be an orbitally nonexpansive mapping and $(C,C)$ has P.Q-N.S Then $T$
has a fixed point.
\end{theorem}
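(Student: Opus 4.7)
The plan is to adapt the minimal-invariant-set strategy used in Theorems~2.1 and~4.2 of the paper, replacing the cyclic machinery by the asymptotic condition built into orbital nonexpansiveness.

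First, I would apply Zorn's lemma to the family $\Omega$ of all nonempty closed convex $T$-invariant subsets of $C$, ordered by reverse inclusion. Weak compactness of $C$ guarantees that every descending chain has a nonempty (closed, convex, $T$-invariant) intersection, so $\Omega$ has a minimal element $K$, necessarily weakly compact. Producing a fixed point in $K$ suffices, and by minimality it is enough to show that $K$ is a singleton.

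Next, I would use orbital nonexpansiveness applied with $D = K$ to extract $u_0 \in K$ satisfying
\[
\limsup_{n\to\infty}\|T^n u_0 - Tu\|\;\le\;\limsup_{n\to\infty}\|T^n u_0 - u\| \quad\forall u\in K,
\]
and set $\varphi(u) := \limsup_n \|T^n u_0 - u\|$. The function $\varphi$ is convex and $1$-Lipschitz in the norm, hence weakly lower semicontinuous on the weakly compact $K$, and the displayed inequality says $\varphi(Tu)\le\varphi(u)$. Writing $R := \min_K\varphi$ (attained by weak l.s.c.), the level set $L := \{u\in K : \varphi(u)=R\}$ is nonempty, closed, convex, and $T$-invariant (since $R$ is the infimum). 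Minimality of $K$ forces $L = K$, i.e.\ $\varphi \equiv R$ on $K$. If $R = 0$ then $T^n u_0 \to v$ in norm for every $v\in K$, collapsing $K$ to a single point, which is automatically a fixed point of $T$.

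The main obstacle is ruling out $R > 0$. In that case $\delta(K,K) > 0$, and since $K$ is convex, $(K,K)$ is a bounded, closed, convex, proximal sub-pair of $(C,C)$ with $\mathrm{dist}(K,K) = 0 = \mathrm{dist}(C,C) < \delta(K,K)$, so the P.Q-N.S hypothesis delivers $(p_1,p_2)\in K\times K$ with $\|p_1 - v\| < \delta(K,K)$ and $\|u - p_2\| < \delta(K,K)$ for all $u,v\in K$. Unlike proximal normal structure, quasi-normal structure gives only pointwise strict bounds, so one cannot immediately conclude $\sup_{v\in K}\|p_1 - v\| < \delta(K,K)$. The genuinely delicate step, modelled on the finishing argument of Theorem~4.2 of Section~4, is to instantiate the pointwise inequality at the orbital points $v = T^n u_0\in K$ to obtain $\|p_1 - T^n u_0\| < \delta(K,K)$ for every $n$, and then to combine this with the constancy $\varphi(p_1) = R$ and the closed-convex-hull construction $\overline{\mathrm{co}}(T(\cdot))$ to produce a proper closed convex $T$-invariant subset of $K$, contradicting minimality. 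Carrying out this pointwise-to-set-level conversion with P.Q-N.S in place of P.N.S is the crux of the argument and requires a careful orbit-by-orbit adaptation of the $L_1^r, L_2^r$ construction from the cyclic case.
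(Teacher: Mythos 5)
Your setup reproduces the paper's skeleton: a minimal nonempty closed convex $T$-invariant $K$ via Zorn's lemma, the point $u_0$ from the definition of orbital nonexpansiveness, the function $\varphi(u)=\limsup_n\|T^nu_0-u\|$ (the paper's $g$) with $\varphi(Tu)\le\varphi(u)$, and a dichotomy that corresponds to the paper's Case I ($R=0$, orbit eventually constant) versus Case II ($R>0$), where the paper builds the sublevel set $D_r=\{u\in D:g(u)\le r\}$ for a suitable $r<\mathrm{diam}(D)$ with $g(p)>r$ and appeals to minimality. Up to and including the reduction $\varphi\equiv R$ on $K$ and the disposal of $R=0$, your argument is sound.

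However, the proposal stops precisely where the theorem still has to be proved. You yourself label the exclusion of $R>0$ as ``the crux'' and ``the main obstacle,'' and what you offer for it is a plan, not an argument. The sketch does not close the gap: P.Q-N.S gives only $\|p_1-T^nu_0\|<\delta(K,K)$ for each $n$ \emph{separately}, and a sequence of strict inequalities places no constraint on $\limsup_n\|p_1-T^nu_0\|$, which may still equal $\delta(K,K)$; so no nonempty proper closed convex $T$-invariant subset of $K$, and no other contradiction, is actually produced. Worse, your own intermediate conclusion that $\varphi\equiv R$ on $K$ shows that every sublevel set $\{u\in K:\varphi(u)\le r\}$ with $r<R$ is \emph{empty}, so the route you point toward (mimicking the $L_1^r,L_2^r$ sublevel/convex-hull construction to get a proper invariant subset below the level of $p_1$) cannot supply the nonempty set that a violation of minimality requires. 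The contradiction must instead come from relating the constant value $R$ to $\delta(K,K)$ --- for instance by using that the orbit tail lies in $K$, that $\varphi(T^mu_0)=R$ for all $m$ forces the orbit to have asymptotic spread $R$, and that the P.Q-N.S point $p_1$ then contradicts $R=\delta(K,K)$ --- and no such argument appears in the proposal. As written, the proof is incomplete at its decisive step.
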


\begin{proof}
Since $C$ is a weakly compact set, from a standard application of
Zorn's lemma, there is a nonempty, closed, convex and T-invariant
subset $D$ of $C$ with no proper subsets enjoying these
characteristics. From the definition of an orbitally nonexpansive
mapping, there exists $u_0\in D$ such that, for every $u\in D$,
$$\limsup_{n\rightarrow \infty}\|T^n(u_0)-T(u)\|\leq \limsup_{n\rightarrow \infty}\|T^n(u_0)-u\|.$$
We will distinguish two cases.\\
\textbf{Case I.} There exists $u_0\in D$ such that $T^n(u_0)=z$ for
$B$ large enough. We claim that, in this case, $z$ is a fixed point
of $T$. Indeed,
$$\|z-T(z)\|=\limsup_{n\rightarrow \infty}\|T^n(u_0)-T(z)\|\leq \limsup_{n\rightarrow \infty}\|T^n(u_0)-z\|=0.$$
\textbf{Case II.} The sequence $(T^n(u_0))$ is bounded and not
(eventually) constant. Since the pair $(C,C)$ has
P.Q-N.S, there exists $(p,p_1)\in D\times D$ such that,
for every $v\in D$; $d(p,v)<diam(D)$
and for every $u\in D$; $d(u,p_1)<diam(D)$.
Since the sequence
$(T^n(u_0))$ is not constant and by P.Q-N.S of $(C,C)$ and by
choosing the right $\alpha$  we have
$$\exists\hspace{.1 cm} m,k \in \mathbb{N} ,m\neq k\hspace{.2 cm}s.t. \hspace{.2 cm} 0<\alpha\|p-T^k(u_0)\|<\|p-T^m(u_0)\|.$$
By supposing $r:=\alpha\|p-T^k(u_0)\|$ we have
$$0\leq r<\|p-T^m(u_0)\|$$
$$\leq \limsup_{n\rightarrow \infty} \|p-T^n(u_0)\|:=g(p)\leq diam(D).$$
This results
$$\exists r>0 \hspace{.2 cm}s.t.\hspace{.2 cm}\exists v\in C \hspace{.2 cm} d(p,v)<r<diam(D), \hspace{.2 cm} g(p)>r.$$
Now we suppose $D_r:=\{u\in D ; g(u)\leq r\}$. We have
$$\forall u\in D_r;\hspace{.2 cm}g(Tu)\leq g(u)\leq r \Rightarrow Tu\in D_r \Rightarrow T:D_r\rightarrow D_r,$$
this shows that $D_r$ is T-invariant,\\
and
$$\forall u,v \in D_r,\hspace{.2 cm} \forall \lambda\in[0,1]; \hspace{.2 cm} g(\lambda u+(1-\lambda)v)=\limsup_{n\rightarrow \infty}\|(\lambda u+(1-\lambda)v)-T^n(u_0)\|$$
$$\leq \lambda \limsup_{n\rightarrow \infty}\|u-T^n(u_0)\|+(1-\lambda)\limsup_{n\rightarrow \infty}\|v-T^n(u_0)\|$$
$$\leq \lambda r+(1-\lambda)r=r,$$
this means that $D_r$ is convex,\\
and\\
$$if\hspace{.2 cm} u_0\in \overline{D}_r \Rightarrow \exists \{v_n\}\subseteq D_r\hspace{.2 cm}s.t.\hspace{.2 cm} v_n\rightarrow u_0\Rightarrow g(u_0)=g(\lim_{n\rightarrow \infty}v_n)$$
$$\lim_{n\rightarrow \infty}g(v_n)\leq \lim_{n\rightarrow \infty}r=r$$
$$\Rightarrow u_0\in D_r \rightarrow \overline{D}_r\subseteq D_r,$$
thus $D_r$ is closed.\\
On the other hand since $D_r\subseteq D$ and $g(p)>r$ then there
exists $p\in D$ such that $p\notin D_r$ thus $D_r \subsetneqq D$.
Which is a contradiction to the minimality of $D$. Thus, case II is
not possible.
\end{proof}

\bibliographystyle{amsplain}

\end{document}